\documentclass[12pt]{article}
\usepackage{graphicx}
\usepackage{amsmath}
\usepackage{amsfonts}
\usepackage{amsthm}
\def\C{\mathbb{C}}

\def\F{\mathbb{F}}

\newtheorem{theorem}{Theorem}[section]
\newtheorem{lemma}[theorem]{Lemma}
\newtheorem{corollary}[theorem]{Corollary}

\begin{document}

\title{Clifford Algebras and Graphs}

\author{Tanya Khovanova \\
\textit{Department of Mathematics, MIT}}
\date{October 18, 2008}
\maketitle

\begin{abstract}
I show how to associate a Clifford algebra to a graph. I describe the structure of these Clifford graph algebras and provide many examples and pictures. I describe which graphs correspond to isomorphic Clifford algebras and also discuss other related sets of graphs. This construction can be used to build models of representations of simply-laced compact Lie groups.
\end{abstract}


\section{Clifford Algebras}\label{ca}

Let $A$ be a unital algebra over $\C$, with $n$ generators $e_1,e_2,\ldots,e_n$ and relations $e_i^2 = -1$ for any $i$, and $e_ie_j = -e_je_i$, for $i \ne j$. $A$ is a classical Clifford algebra.

As a vector space $A$ has dimension $2^n$ and is generated by monomials $e_{i_1}e_{i_2}\ldots e_{i_k}$, where $i_1 < i_2 < \ldots < i_k$. The monomials are in one-to-one correspondence with the subsets of the set $\{1, 2, \ldots n\}$ or with binary strings of length $n$. 

Suppose $\alpha$ is a binary string of length $n$. We associate with this string the monomial $e_\alpha = e_{i_1}e_{i_2}\ldots e_{i_k}$, where $1 \le i_1 < i_2 < \ldots < i_k \le n$ and $i_1$, $i_2$, $\ldots$, $i_k$ are positons of ones in the string $\alpha$. We associate 1 with the string of all zeroes. If $\beta$ is a binary string too, then $e_\alpha e_\beta = \pm e_\gamma$, where $\gamma = \alpha \text{XOR} \beta$, and XOR is the standard parity (xoring) operation on binary strings.

Let us look at the center of this algebra --- the subalgebra of elements that commute with all elements. Are there central elements not in $\C1$? Every monomial either commutes or anticommutes with generators $e_i$. From this we can deduce that the center is spanned by monomials. Suppose a monomial $c$ in the center is of length $m$; that is, $c$ is the product of $m$ different generators. Then, $e_i c = (-1)^m c e_i$ if $e_i$ is not in the monomial $c$, and $e_i c = (-1)^{m-1} c e_i$ otherwise. From here, we see that $c$ either contains all the generators or none. The product of all the generators $e_1e_2\ldots e_n$ is in the center iff $n$ is odd.

We showed that the classical Clifford algebra has a one-dimensional center for even $n$ and a two-dimensional center for odd $n$.

\section{Clifford Graph Algebras}

Let $G$ be a graph with $n$ vertices and no multiple edges and no loops. We associate with this graph a unital algebra $A_G$ over $\C$, with $n$ generators $e_1$, $e_2$, $\ldots$, $e_n$ corresponding to the vertices; relations $e_i^2 = -1$ for any $i$; and relations $e_ie_j = - e_je_i$, if there is an edge between the $i^{th}$ and $j^{th}$ vertices, and $e_ie_j = e_je_i$, if there is no edge between the $i^{th}$ and $j^{th}$ vertices.

We call the Clifford algebra associated with a given graph a Clifford graph algebra.

The classical Clifford algebra in Section \ref{ca} is a Clifford graph algebra of the complete graph with $n$ vertices. If our graph doesn't have any edges, then its Clifford graph algebra is commutative.

\section{The Center of a Clifford Graph Algebra}

As in the case of the classical Clifford algebra, a Clifford graph algebra has dimension $2^n$ over $\C$, and has a basis of monomials $e_\alpha$, over all binary strings $\alpha$ of length $n$, or, correspondingly subsets of the set \{1, 2, \ldots, n\}.

I would like to describe the center of a Clifford graph algebra. Often, understanding the center of a ring is a first step towards understanding its structure and the structre of its representations. In case of Clifford graph algebras the dimension of the center uniquely determines the structure of the given Clifford algebra as proven in theorem \ref{ccca}. I will denote the center of algebra $A$ by $Z(A)$.

Suppose an element $c = \sum a_\alpha e_\alpha$ is in the center, where $e_\alpha$ are  monomials. As each monomial either commutes or anticommutes with the basis elements $e_i$, every monomial $e_\alpha$ lies in the center. Hence, to analyze the center it is enough to analyze central monomials. A monomial $e_\alpha$ is central iff for each vertex $i$, the number of edges connecting it to the set of vertices $\alpha$ is even.

Let me describe monomials in the center in terms of the adjacency matrix. Let me remind you that the adjacency matrix of a graph $G$ is the matrix $(a_{ij})_{i,j=1}^n$ such that $a_{ij} = 1$ if the vertices $i$ and $j$ are connected and $a_{ij} = 0$ if they are not connected. A central monomial $e_\alpha$ corresponds to a subset of vertices $\alpha$. If we add up rows of the adjacency matrix corresponding to this subset, the resulting row vector will have only even entries. That is, this vector is the zero vector modulo 2. 

The elements of the center form a subalgebra, and the dimension of the center is always a power of 2.

\section{Examples}\label{e}

Let us consider path graphs and star graphs as examples. Not only providing examples can help my readers to understand my construction, but I also have other reasons for providing these particular examples, which I will reveal at the end of this section.

Let us start with a path graph $P$ on $n$ vertices. Vertices $i$ and $j$ are connected by an edge iff $|i-j| = 1$ (see Figure \ref{path6}). 

\begin{figure}[htp]
  \centering
  \includegraphics[scale=0.5]{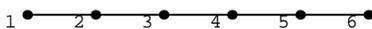}
  \caption{Path Graph on 6 Vertices.}\label{path6}
\end{figure}

If $n$ is even, the center of the corresponding Clifford graph algebra $A_P$ is one-dimensional: $Z(A_P) \simeq \C$. If $n$ is odd, then the center also contains the monomial $e_1e_3\ldots e_n$ --- the product of odd-numbered generators. In Figure \ref{path7} you can see a path graph with 7 vertices. The monomial $e_1e_3e_5e_7$ is in the center of this Clifford graph algebra.

\begin{figure}[htp]
  \centering
  \includegraphics[scale=0.5]{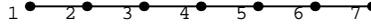}
  \caption{Path Graph on 7 Vertices.}\label{path7}
\end{figure}

Let us compare the Clifford algebras of the complete graph and the path graph with $n$ vertices. If we denote the generators of the Clifford algebra of the complete graph as $e_i$ and the generators of the Clifford algebra of the path graph as $e_i^\prime$, we can show that the Clifford algebras of these two graphs are isomorphic by presenting an isomorphism: 
$$e_1^\prime = e_1 \text{, and }  e_i^\prime = e_{i-1}e_i, \text{ for } i \ne 1,$$
and the other way: 
$$e_i = e_i^\prime e_{i-1}^\prime \cdots e_1^\prime.$$

Let us consider the star graph on $n$ vertices (see Figure \ref{star8}). 

\begin{figure}[htp]
  \centering
  \includegraphics[scale=0.5]{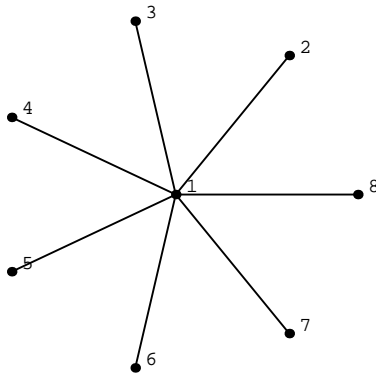}
  \caption{Star Graph on 8 Vertices.}\label{star8}
\end{figure}

In this case, the vertex number 1 connects to every other vertex, and there are no other edges. The center is a linear combination of monomials of even length such that they do not contain $e_1$. The dimension of the center is $2^{n-2}$. Let us show that the Clifford algebra of the star graph is isomorphic to the Clifford algebra of a graph with $n$ vertices and one edge. We denote the generators of the Clifford algebra of the star graph as $e_i$ and the generators of the Clifford algebra of the graph with $n$ vertices and one edge as $e_i^\prime$, where we number the vertices adjacent to the edge as 1 and 2, so that $e_1^\prime e_2^\prime = -e_2^\prime e_1^\prime$, and all other pairs of generators commute. We can show that the Clifford algebras of these two graphs are isomorphic by presenting an isomorphism: 
$$e_1^\prime = e_1 \text{, } e_2^\prime = e_1 e_2 \text{ and for } i > 2 \text{, } e_i^\prime = e_2 e_i,$$ and the other way: 
$$e_1 = e_1^\prime \text{, } e_2 = e_1^\prime e_2^\prime \text{ and } e_i = e_i^\prime e_1^\prime e_2^\prime \text{ for } i > 2.$$

Now it is time to explain why I particularly like these two examples. The path graph and the star graph with $n$ vertices are quite similar: they have the same number of edges --- $n-1$, not to mention that for $n$ equal 2 or 3 the path and the star graphs are isomorphic. On the other hand, the Clifford algebra of a path graph has the smallest possible center (we will see the proof later) and the Clifford algebra of a star graph has largest center for a non-commutative Clifford graph algebra of this size.

\section{Clifford Graph Algebra Structure}\label{cgas}

Every central monomial in a Clifford graph algebra corresponds to a central idempotent. Suppose $e_\alpha$ is a central monomial. Then, $e_\alpha^2 = \pm 1$. Let us denote by $f_\alpha$ a multiple of $e_\alpha$ such that $f_\alpha^2 = 1$. If $e_\alpha^2 = 1$, then $f_\alpha = e_\alpha$ and if $e_\alpha^2 = -1$ then $f_\alpha = \imath e_\alpha$. The element $c = (1+f_\alpha)/2$ is a central idempotent. Being a central idempotent means that $c$ is in the center and $c^2 = c$, which is easy to check. Central idempotent $c$ gives rise to a decomposition of our Clifford graph algebra $A$ as a direct sum of $Ac$ and $A(1-c)$ (see Idempotence article at wiki \cite{wiki_Idempotence}). 

Let us take an index $j$ that belongs to the set $\alpha$, that is, $e_j$ is one of the generators that the monomial $e_\alpha$ contains. Denote the graph $G$ with the vertex corresponding to $j$ removed by $G_{-j}$. The algebra $A_{G_{-j}}$ is naturally embedded into $A_G$. The map $x \to xc$ is an isomorphism between this embedding of $A_{G_{-j}}$ and $Ac$. Similarly, the map $x \to x(1-c)$ is an isomorphism between the embedding of $A_{G_{-j}}$ and $A(1-c)$. Hence, $A_G$ is a direct sum of two copies of $A_{G_{-j}}$. We can continue the decomposition until we get a graph such that its Clifford algebra has a one-dimensional center. 

If the dimension of the center of $A_G$ is $2^k$, then $A_G$ is a direct sum of $2^k$ copies of the Clifford graph algebra of a subgraph $G^\prime$ of $G$, such that $A_{G^\prime}$ has a one-dimensional center.

By the Artin-Wedderburn theorem \cite{Artin} we can show that a Clifford graph algebra is isomorphic to a direct sum of matrix algebras. Therefore, any Clifford graph algebra is a direct sum of $2^k$ copies of an $m \times m$ matrix algebra, for some $k$. We can deduce from here that $m = 2^{(n-k)/2}$. In particular, we see that for graphs with odd number of vertices, the corresponding Clifford graph algebra has to have a center of dimension more than 1.

In our examples, we saw that the Clifford graph algebra of a complete graph is isomorphic to a Clifford algebra of a path graph. For $n$ even, both of these algebras are isomorphic to a matrix algebra over the $2^{n/2}$-dimensional space. For odd $n$, the Clifford algebra of the complete graph is a direct sum of two Clifford graph algebras of the complete graph for $n-1$. The same goes for the path graph.

The Clifford algebra corresponding to a star graph is the direct sum of $2^{n-2}$ copies of a $2 \times 2$ matrix algebra.

\section{Small Graphs}

Previously we covered complete graphs, path graphs and star graphs in our examples. Now I would like to cover all graphs with small number of vertices and see how their Clifford graph algebras are different from each other.

Let us denote by $\text{Cliff}_k(n)$ the number of graphs with $n$ vertices such that the center of their Clifford graph algebras has dimension $2^k$.

For $n=1$, there is only one graph and its algebra is $\C \oplus \C$. $\text{Cliff}_2(1) = 1$.

For $n=2$, there are two graphs --- a complete graph whose Clifford algebra is a $2 \times 2$ matrix algebra $\text{Mat}(2)$ and the dual graph without edges whose Clifford algebra is $\C \oplus \C \oplus \C \oplus \C$. Thus, $\text{Cliff}_1(2) = 1$ and $\text{Cliff}_4(2) = 1$.

For $n=3$, there are four graphs. The Clifford algebra of a graph with no edges is commutative, thus it has an 8-dimensional center and is equal to the direct sum of 8 copies of $\C$. All other graphs (see Figure \ref{ThreeGraphs}) have a two-dimensional center, hence they are isomorphic to $\text{Mat}(2) \oplus \text{Mat}(2)$. Thus, $\text{Cliff}_2(3) = 3$ and $\text{Cliff}_8(3) = 1$.

\begin{figure}[htp]
  \centering
  \includegraphics[scale=0.5]{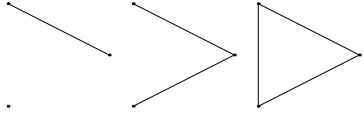}
  \caption{Graphs with 3 vertices and noncommutative Clifford algebras.}\label{ThreeGraphs}
\end{figure}

For $n=4$, there are 11 graphs. Below I describe all the graphs with 4 vertices and present the dimension of the center of the corresponding Clifford algebra.

\begin{enumerate}
\item There is one graph without edges --- the dimension is 16. 
\item There is one graph with one edge --- the dimension is 4. 
\item There are two graphs with two edges: the graph with two adjacent edges --- the dimension is 4; the graph with 2 non-adjacent edges --- the dimension is 1. 
\item There are 3 graphs with 3 edges: the complete graph with 3 vertices plus 1 isolated vertex --- the dimension is 4; the path graph --- the dimension is 1; the star graph --- the dimension is 4. 
\item There are two graphs with 4 edges: the cycle graph --- the dimension is 4; the kite graph which is the dual graph to the graph with two edges adjacent to each other --- the dimension is 1.
\item There is one graph with 5 edges --- the dimension is 4.
\item The only graph with 6 edges is the complete graph --- the dimension is \nolinebreak[1] 1.
\end{enumerate}

You can see graphs with 4 vertices and their corresponding dimensions in Figure \ref{FourGraphs}.  As we calculated, $\text{Cliff}_1(4) = 4$, $\text{Cliff}_4(4) = 6$, and  $\text{Cliff}_{16}(4) = 1$.

\begin{figure}[htp]
  \centering
  \includegraphics[scale=0.5]{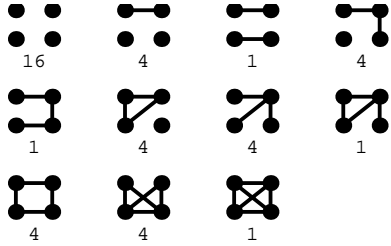}
  \caption{Graphs with four vertices.}\label{FourGraphs}
\end{figure}

\section{Clifford Class}\label{cc}

Let us say that two graphs with the same number of vertices belong to the same Clifford class if the centers of their Clifford algebras have the same dimension. 

\begin{theorem}\label{ccca}
If two graphs are in the same Clifford class then their Clifford algebras are isomorphic.
\end{theorem}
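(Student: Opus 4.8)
The plan is to reduce the statement to the structure theorem established in Section~\ref{cgas}. That section shows that any Clifford graph algebra $A_G$ on $n$ vertices, whose center has dimension $2^k$, decomposes as a direct sum of $2^k$ copies of an $m\times m$ matrix algebra over $\C$, and moreover that $m$ is forced to satisfy $m = 2^{(n-k)/2}$. The key observation is that the isomorphism type of a finite-dimensional semisimple $\C$-algebra is completely determined by the multiset of matrix-block sizes appearing in its Artin--Wedderburn decomposition. So if $G_1$ and $G_2$ both have $n$ vertices and both have centers of dimension $2^k$, then
$$A_{G_1} \;\simeq\; \underbrace{\mathrm{Mat}(2^{(n-k)/2})\oplus\cdots\oplus\mathrm{Mat}(2^{(n-k)/2})}_{2^k}\;\simeq\; A_{G_2},$$
and we are done.

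The steps, in order, are: (1) record that $A_G$ is semisimple --- this follows because $A_G$ is a finite-dimensional algebra over $\C$ that decomposes (via the central idempotents built from central monomials in Section~\ref{cgas}) into a direct sum of copies of a Clifford graph algebra $A_{G'}$ with one-dimensional center, and such an algebra, having trivial center and being finite-dimensional over an algebraically closed field, must by Artin--Wedderburn be a single matrix algebra; (2) invoke the dimension bookkeeping from Section~\ref{cgas} to pin down both the number of blocks ($2^k$, where $2^k = \dim Z(A_G)$) and the common block size ($2^{(n-k)/2}$, forced by $\dim_\C A_G = 2^n$); (3) conclude that this data depends only on the pair $(n,k)$, hence two graphs in the same Clifford class yield the same algebra up to isomorphism.

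The only point that needs a little care --- and which I expect to be the main (mild) obstacle --- is Step~(1): verifying that the repeated halving procedure of Section~\ref{cgas} really does terminate in a matrix algebra rather than merely in ``an algebra with one-dimensional center.'' Concretely, one must argue that a Clifford graph algebra with one-dimensional center has \emph{no} nontrivial two-sided ideals, equivalently that it is simple; this is where one genuinely uses that every central \emph{element} is a linear combination of central \emph{monomials} (established in Section~3), so that a one-dimensional center means the only central monomial is $1$. Granting this, the argument is essentially just assembling facts already proved, and the parameter count $m = 2^{(n-k)/2}$ does the rest. Note the construction even exhibits an explicit chain of isomorphisms $A_{G_1}\simeq A_{G'}^{\,\oplus 2^k}\simeq A_{G_2}$ where $G'$ is any one-dimensional-center subgraph reached from either $G_i$, so the isomorphism is not merely abstract.
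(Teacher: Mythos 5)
Your proposal is correct and follows essentially the same route as the paper, which simply cites the structure result of Section~\ref{cgas} (the decomposition via central idempotents, Artin--Wedderburn, and the count $m = 2^{(n-k)/2}$) in a single sentence; you have merely spelled out the details, including the one point both arguments lean on implicitly, namely that the one-dimensional-center case yields a single matrix algebra. Your level of rigor matches (indeed slightly exceeds) the paper's own.
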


\begin{proof}
The structure of a Clifford graph algebra is uniquely defined by the number of vertices and the dimension of its center.
\end{proof}

Let us consider a special graph with $2k + m$ vertices and $k$ edges which has $m$ isolated vertices and $2k$ vertices of degree one. I denote this graph as $G(k,m)$. This graph is the union of $k$ $K_2$-graphs and $m$ $K_1$-graphs. (Here I use the standard definition, $K_n$, for a complete graph with $n$ vertices.) It is easy to check that the center of the Clifford algebra of this graph has dimension $2^m$ and the algebra itself is isomorphic to the direct sum of $2^m$ matrix algebras of size $2^k \times 2^k$.

\begin{lemma}\label{Constr} 
Suppose $G$ is a graph with at least one edge, then there is a graph $G^\prime$ such that $G^\prime$ is a union of $K_2$ and some graph and $A_G$ is isomorphic to $A_{G^\prime}$.
\end{lemma}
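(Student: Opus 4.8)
The plan is to reduce the statement to Theorem \ref{ccca} together with the explicit family $G(\ell,m)$ introduced just above. Since Theorem \ref{ccca} says that two graphs on the same number of vertices whose Clifford algebras have centers of equal dimension have isomorphic Clifford algebras, it suffices to produce \emph{some} graph $G^\prime$ on $n$ vertices that has a $K_2$ among its connected components and satisfies $\dim Z(A_{G^\prime}) = \dim Z(A_G)$. The natural candidate is $G^\prime = G(\ell,k)$ for suitable $\ell \ge 1$ and $k$, whose Clifford algebra is the direct sum of $2^k$ matrix algebras of size $2^\ell\times 2^\ell$ and hence has a center of dimension $2^k$.

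First I would pin down $k$ and check the arithmetic. Write $\dim Z(A_G) = 2^k$. Because $G$ has an edge, $A_G$ is noncommutative, so its center is a proper subalgebra and $k < n$. By the structure results of Section \ref{cgas}, $A_G$ is a direct sum of $2^k$ copies of the $2^{(n-k)/2}\times 2^{(n-k)/2}$ matrix algebra; in particular $(n-k)/2$ is a nonnegative integer, so $n \equiv k \pmod 2$. (Equivalently: the adjacency matrix of $G$ is symmetric with zero diagonal, hence alternating over $\F_2$, hence of even rank, and $k = n - \operatorname{rank}_{\F_2}(A_G)$.) Combining the two facts, $n - k$ is a positive even integer, so $\ell := (n-k)/2 \ge 1$.

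Now set $G^\prime = G(\ell, k)$, the disjoint union of $\ell$ copies of $K_2$ and $k$ copies of $K_1$. It has $2\ell + k = n$ vertices, and, as recorded above, $\dim Z(A_{G^\prime}) = 2^k = \dim Z(A_G)$; so $G$ and $G^\prime$ are in the same Clifford class and $A_G \cong A_{G^\prime}$ by Theorem \ref{ccca}. Since $\ell \ge 1$, the graph $G^\prime$ contains a $K_2$ as a component, i.e. $G^\prime = K_2 \sqcup G(\ell-1, k)$ is a union of $K_2$ and another graph, which is exactly what the lemma asks for.

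The only real content of the argument is the parity observation $n \equiv k \pmod 2$: it is what upgrades ``$G$ noncommutative, so $k \le n-1$'' to ``$k \le n-2$'', which in turn guarantees $\ell \ge 1$ so that a $K_2$-component is actually available in $G^\prime$. Without it the construction would stall. Everything else is bookkeeping plus the citation of Theorem \ref{ccca}; alternatively one could avoid that theorem and instead exhibit explicit generator substitutions in the spirit of Section \ref{e}, but going through the Clifford class is cleaner.
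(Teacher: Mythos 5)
Your argument is correct, but it takes a genuinely different route from the paper's. The paper proves Lemma \ref{Constr} by an explicit change of generators inside $A_G$: after renumbering so that vertices $1$ and $2$ are adjacent, it keeps $e_1^\prime=e_1$, $e_2^\prime=e_2$ and replaces each remaining $e_i$ by one of $e_i$, $\imath e_ie_2$, $\imath e_ie_1$, $e_ie_2e_1$ according to which of the vertices $1,2$ the vertex $i$ is joined to; the new elements still square to $-1$, pairwise commute or anticommute, and all commute with $e_1^\prime$ and $e_2^\prime$, so they present $A_G$ as $A_{G^\prime}$ with $\{1,2\}$ an isolated $K_2$-component. You instead identify the Clifford class of $G$ and invoke Theorem \ref{ccca}, taking $G^\prime=G(\ell,k)$; the parity observation $n\equiv k\pmod 2$ (via the even rank of an alternating matrix over $\F_2$, or via $m=2^{(n-k)/2}$) is indeed the one nontrivial point needed to guarantee $\ell\ge 1$, and you handle it correctly, so the proof is valid as a proof of the stated lemma. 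The trade-off is that Theorem \ref{ccca} rests on the Artin--Wedderburn analysis of Section \ref{cgas}, so your proof imports that machinery; since the paper later uses Lemma \ref{Constr} (through Theorem \ref{ClCl}) precisely to give an Artin--Wedderburn-free second proof of the structure theorem in Corollary \ref{cgastructure}, substituting your argument there would make that second proof circular. The paper's constructive substitution also underlies the ``basic replacement'' and the row-reduction picture of the adjacency matrix over $\F_2$ in the following section, so the explicit version carries extra content beyond the bare existence of an isomorphism.
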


\begin{proof}
Let us number the vertices in graph $G$ in such a way that the vertices numbered 1 and 2 are connected. Suppose $e_i$ are the generators in $A_G$ --- the Clifford algebra of the graph $G$. We will build new generators in the algebra $A_G$. We will have $e_1^\prime = e_1$ and $e_2^\prime = e_2$. For every $i > 2$ let us choose a new generator $e_i^\prime$ in the following way:

\begin{itemize}
\item If the vertex $i$ is not connected to either vertex 1 or vertex 2, then $e_i^\prime = e_i$
\item If the vertex $i$ is connected to vertex 1 and is not connected to vertex 2, then $e_i^\prime = \imath e_i e_2$
\item If the vertex $i$ is not connected to vertex 1 and is connected to vertex 2, then $e_i^\prime = \imath e_i e_1$
\item If the vertex $i$ is connected to both vertices 1 and 2, then $e_i^\prime = e_i e_2 e_1$
\end{itemize}

The new generators $e_i^\prime$ have the property ${e_i^\prime}^2 = -1$ and $e_ie_j = \pm e_je_i$. Hence, we can build a graph $G^\prime$ for which the generators $e_i^\prime$ generate its Clifford graph algebra.

The algebra $A_{G^\prime}$ is isomorphic to $A_G$. The generators $e_i^\prime$ commute with both $e_1^\prime$ and $e_2^\prime$ for $i > 2$. This means that vertices numbered 1 and 2 in the graph $G^\prime$ are isolated from the rest of the graph $G^\prime$. From here we see that $G^\prime$ is the union of $K_2$ and another graph.
\end{proof}

\begin{theorem}\label{ClCl} 
Each Clifford class has exactly one representative of type $G(k,m)$.
\end{theorem}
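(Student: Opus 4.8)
The plan is to prove uniqueness and existence separately; both are short given the machinery already in place.

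\emph{Uniqueness.} Suppose $G(k,m)$ and $G(k',m')$ lie in the same Clifford class. Then they have the same number of vertices, so $2k+m=2k'+m'$, and their Clifford algebras have centers of the same dimension, so $2^m=2^{m'}$, as recorded just before Lemma \ref{Constr}. Hence $m=m'$, and therefore $k=k'$, so the two graphs coincide. Thus a Clifford class contains at most one graph of type $G(k,m)$.

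\emph{Existence.} Let $\mathcal{C}$ be a Clifford class, represented by a graph $G$ on $n$ vertices. I will iterate Lemma \ref{Constr}. First note that for a disjoint union of graphs one has $A_{H_1 \sqcup H_2} \cong A_{H_1} \otimes_\C A_{H_2}$, since generators attached to different components commute; this is immediate from the defining relations. Now if $G$ has an edge, Lemma \ref{Constr} gives $A_G \cong A_{K_2 \sqcup H} \cong A_{K_2} \otimes_\C A_H$ for some graph $H$ on $n-2$ vertices. Repeating on $H$, and so on, the vertex count drops by $2$ at each step, so after some number $k$ of steps we reach a graph with no edges, namely $m := n-2k$ isolated vertices, and $A_G \cong A_{G(k,m)}$. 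Since $A_G$ and $A_{G(k,m)}$ are isomorphic algebras and each has dimension $2^n$, the underlying graphs have the same number of vertices and isomorphic — in particular equidimensional — centers; hence $G(k,m) \in \mathcal{C}$.

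I do not expect any genuine obstacle here. The two points that want a word of care are that disjoint union of graphs corresponds to the (ungraded) tensor product of Clifford graph algebras, which follows at once from the relations, and that the iteration of Lemma \ref{Constr} terminates, which is clear because the vertex count is finite and strictly decreasing. (Alternatively, existence can be read off directly from the structure theorem of Section \ref{cgas}: an $n$-vertex Clifford graph algebra with $2^c$-dimensional center is a direct sum of $2^c$ matrix algebras of size $2^{(n-c)/2}$, so $n\equiv c \bmod 2$, and then $G\big((n-c)/2,\,c\big)$ has $n$ vertices and a $2^c$-dimensional center, placing it in the same class. I would present the iterative argument as the main one, since it is self-contained and does not invoke Artin--Wedderburn, and mention the structure-theoretic shortcut as a remark.)
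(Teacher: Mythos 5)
Your proof is correct and follows essentially the same route as the paper: existence by iterating Lemma \ref{Constr} to split off $K_2$ factors until no edges remain, with uniqueness forced by the vertex count $2k+m=n$ together with the center dimension $2^m$. You are somewhat more careful than the paper, which leaves both the uniqueness count and the disjoint-union (tensor product) step needed for the induction implicit.
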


\begin{proof}
The theorem is trivial for graphs with one or two vertices. If a graph with $n$ vertices doesn't have an edge, then the graph is $G(0,n)$. If a graph $G$ has an edge, then we can build a graph $G^\prime$ which is a union of $K_2$ and a graph with $n-2$ vertices and such that $A_G$ is isomorphic to $A_{G^\prime}$. We can use induction on the number of vertices to finish the proof.
\end{proof}

\begin{corollary}\label{cgastructure}
A Clifford algebra of a graph is isomorphic to a direct sum of $2^m$ copies of the matrix algebra over $2^k$-dimensional space for some $k$ and $m$ such that $n = 2k+m$.
\end{corollary}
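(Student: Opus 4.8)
The plan is to obtain the corollary by combining the two theorems just proved with the already-noted structure of the algebras $A_{G(k,m)}$. Given an arbitrary graph $G$ on $n$ vertices, Theorem~\ref{ClCl} produces a graph of the form $G(k,m)$ lying in the same Clifford class as $G$. Since two graphs can share a Clifford class only if they have the same number of vertices, and $G(k,m)$ has $2k+m$ vertices by construction, we get $n = 2k+m$ for free. Theorem~\ref{ccca} then yields an algebra isomorphism $A_G \cong A_{G(k,m)}$, so the whole problem is reduced to identifying $A_{G(k,m)}$.

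For that identification I would use the remark, made just before Lemma~\ref{Constr}, that $G(k,m)$ is the disjoint union of $k$ copies of $K_2$ and $m$ copies of $K_1$. When a graph is a disjoint union of subgraphs, generators attached to different components commute, so the Clifford graph algebra of the union is the tensor product of the Clifford graph algebras of the pieces. Using $A_{K_2} \cong \text{Mat}(2)$ (the $n=2$ complete-graph computation) and $A_{K_1} \cong \C \oplus \C$, we obtain
$$A_{G(k,m)} \;\cong\; \text{Mat}(2)^{\otimes k} \otimes (\C \oplus \C)^{\otimes m} \;\cong\; \text{Mat}(2^k) \otimes \C^{2^m},$$
which is exactly a direct sum of $2^m$ copies of the matrix algebra over a $2^k$-dimensional space. (Equivalently, one could skip the tensor identity and instead iterate the central-idempotent decomposition of Section~\ref{cgas}: the center of $A_{G(k,m)}$ has dimension $2^m$, so $A_{G(k,m)}$ splits into $2^m$ copies of a Clifford graph algebra with one-dimensional center on $2k$ vertices, and that algebra is $A_{K_2}^{\otimes k} \cong \text{Mat}(2^k)$.)

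I do not expect a genuine obstacle here, since the substantive work has been absorbed into Theorems~\ref{ccca} and~\ref{ClCl}; this statement is essentially a restatement of their combination. The only point deserving a line of care is the bookkeeping in the final isomorphism — checking that the tensor factors collapse to $\text{Mat}(2^k)$ and $\C^{2^m}$ and that the vertex counts reconcile as $n = 2k+m$ — and, as noted above, the vertex-count identity is automatic from the definition of a Clifford class.
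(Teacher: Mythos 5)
Your argument is correct, and it diverges from the paper's proof in the one step that actually requires computation. Both you and the paper reduce the general graph $G$ to the canonical representative $G(k,m)$ via Theorem~\ref{ClCl} and Theorem~\ref{ccca} (with $n=2k+m$ coming for free from the vertex count, as you note). The paper then identifies $A_{G(k,0)}$ by observing that, by Theorem~\ref{ClCl}, it lies in the same Clifford class as the complete graph $K_{2k}$, and quoting the classical fact that the complex Clifford algebra on $2k$ generators is $\text{Mat}_{2^k}$; the isolated vertices are then peeled off by the central-idempotent decomposition of Section~\ref{cgas}. You instead exploit the disjoint-union structure of $G(k,m)$ directly: generators in different components commute, so $A_{G_1\sqcup G_2}\cong A_{G_1}\otimes A_{G_2}$ (the multiplication map is a surjective homomorphism between algebras of equal dimension $2^{n_1+n_2}$), giving $A_{G(k,m)}\cong \text{Mat}(2)^{\otimes k}\otimes(\C\oplus\C)^{\otimes m}\cong \text{Mat}(2^k)\otimes\C^{2^m}$. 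This buys you something real: the paper's stated aim in this proof is to avoid Artin--Wedderburn, yet its identification of $A_{K_{2k}}$ with $\text{Mat}_{2^k}$ is itself an imported structure theorem (earlier in the paper it was obtained \emph{via} Artin--Wedderburn), whereas your tensor-product computation needs only the elementary fact $A_{K_2}\cong\text{Mat}(2)$ from the $n=2$ case and is therefore fully self-contained. The only thing worth adding is a one-line justification of the tensor decomposition for disjoint unions, since the paper never states it explicitly; everything else is standard bookkeeping, as you say.
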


\begin{proof}
We already proved this fact in Section \ref{cgas} using the Artin-Wedderburn theorem. Here we prove it again without using this theorem. From theorem \ref{ClCl}, all Clifford algebras with one-dimensional center and the same number of vertices are isomorphic to each other. That means that the Clifford algebra of the graph $G(k, 0)$, sometimes called the ladder rung graph, is isomorphic to the Clifford graph algebra of the complete graph $K_{2k}$, which is the matrix algebra $\text{Mat}_{2^k}$. Hence the Clifford algebra of $G(k,m)$, as well as all other algebras of the same class, is isomorphic to a direct sum of $2^m$ copies of $\text{Mat}_{2^k}$.
\end{proof}

\section{Adjacency Matrices}

The adjacency matrix of a graph $G$ is a matrix $(a_{ij})_{i,j=1}^n$ such that $a_{ij} = 1$ if the vertices $i$ and $j$ are connected and $a_{ij} = 0$ if they are not connected. By definition, the adjacency matrix is symmetric: $a_{ij} = a_{ji}$; and by requiring our graphs not to have loops we restrict the diagonal to have only zeroes: $a_{ii} = 0$.

In the proof of lemma \ref{Constr} we used a construction where we express old generators through new generators. We can express our replacement through a sequence of basic replacements. The basic replacement is the following. The new generators are $e_i^\prime = e_i$ for $i \ne 2$ and $e_2^\prime = a e_1 e_2$. The coefficient $a$ is needed to adjust the square of $e_2^\prime$ to be equal to $-1$. The new adjacency matrix $a_{ij}^\prime$ can be calculated from the old adjacency matrix by the following operation: replace the second row in the adjacency matrix by the sum of the first row and the second row modulo 2, then do the same operation on the columns.

In particular, we see that the basic operation doesn't change the rank or the determinant of the adjacency matrix if we consider this matrix to be a matrix over field $\F_2$. In particular, the basic operation doesn't change the parity of the determinant.

\begin{lemma}
Two graphs belong to the same Clifford class iff their adjacency matrices considered as matrices over $\F_2$ have the same rank.
\end{lemma}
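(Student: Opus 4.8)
The plan is to connect the dimension of the center of $A_G$ to the $\F_2$-rank of the adjacency matrix $M_G$, and then invoke Theorem~\ref{ccca} together with Theorem~\ref{ClCl} to get the equivalence. The starting point is the description of central monomials already given: a monomial $e_\alpha$ (indexed by a subset $\alpha \subseteq \{1,\dots,n\}$, i.e. a vector in $\F_2^n$) is central iff, for every vertex $i$, the number of edges joining $i$ to $\alpha$ is even. In matrix language this says exactly that $M_G \alpha = 0$ over $\F_2$, where $\alpha$ is regarded as a column vector. Hence the set of subsets $\alpha$ giving central monomials is precisely the kernel of $M_G$ acting on $\F_2^n$. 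Since the center of $A_G$ is spanned by these central monomials and distinct monomials are linearly independent, $\dim_\C Z(A_G) = 2^{\dim_{\F_2} \ker M_G} = 2^{\,n - \mathrm{rank}_{\F_2} M_G}$.

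From this formula the lemma is almost immediate. First I would note that two graphs on $n$ vertices lie in the same Clifford class iff their centers have the same dimension (by definition of Clifford class), iff $\dim \ker M_G = \dim \ker M_{G'}$, iff $\mathrm{rank}_{\F_2} M_G = \mathrm{rank}_{\F_2} M_{G'}$ by the rank–nullity theorem over $\F_2$. This already gives both directions of the stated iff, so strictly speaking no further machinery is needed. For a cleaner exposition, though, I would cross-check consistency with the earlier structural results: Corollary~\ref{cgastructure} shows $A_G \cong \bigoplus^{2^m} \mathrm{Mat}_{2^k}$ with $n = 2k+m$, so the center has dimension $2^m$; matching $2^m = 2^{n-\mathrm{rank}}$ forces $m = n - \mathrm{rank}_{\F_2}M_G$, equivalently $\mathrm{rank}_{\F_2} M_G = 2k$. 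This is reassuring since it recovers the fact that the $\F_2$-rank of an adjacency matrix (a symmetric matrix with zero diagonal, i.e. an alternating form over $\F_2$) is always even, which is consistent with the basic row/column operation from the previous section preserving the rank while transforming any graph with an edge toward a $G(k,m)$ representative.

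The only genuine content beyond bookkeeping is the identification "$e_\alpha$ central $\iff M_G\alpha = 0$ over $\F_2$," and that was essentially established in Section~3; I would just restate it carefully, being explicit that adding the rows of $M_G$ indexed by $\alpha$ and reducing mod $2$ yields the vector whose $i$-th entry counts edges from $i$ into $\alpha$, so that vanishing of this vector is the centrality condition for every $i$ simultaneously. I do not anticipate a serious obstacle here; the subtlety to watch is purely notational — making sure that "subset $\alpha$" as an index of a monomial and "$\alpha$ as a $0$–$1$ column vector" are used consistently, and that the span of a set of distinct basis monomials has dimension equal to the number of monomials, so that $\dim_\C Z(A_G)$ really is $2^{|\ker M_G|}$ and not merely bounded by it. Once the kernel description is in place, the rank–nullity step and the appeal to the definition of Clifford class finish the proof in two lines.
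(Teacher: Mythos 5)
Your proof is correct, but it takes a genuinely different route from the paper's. The paper proves the lemma by reduction to normal form: it observes that the basic replacement $e_2' = ae_1e_2$ changes the adjacency matrix by a simultaneous row and column operation over $\F_2$, hence preserves the rank, and that every graph can be reduced by such operations to its unique canonical representative $G(k,m)$, whose adjacency matrix visibly has rank $2k$; the rank therefore separates the classes. You instead argue directly: the centrality criterion from Section~3 says $e_\alpha$ is central iff $M_G\alpha = 0$ over $\F_2$, so the central monomials are indexed by $\ker M_G$, giving $\dim_\C Z(A_G) = 2^{\dim_{\F_2}\ker M_G} = 2^{\,n-\mathrm{rank}_{\F_2}M_G}$, and rank--nullity finishes the equivalence. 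Your argument is more elementary (it needs neither the basic-operation invariance nor the uniqueness of the $G(k,m)$ representative from Theorem~\ref{ClCl}) and it delivers the explicit formula $\dim Z(A_G) = 2^{n-2k}$ as a byproduct, which is exactly the corollary the paper states immediately after the lemma; the paper's route, by contrast, fits its running theme of reducing graphs to canonical form and reuses machinery it has already built. One small slip in your closing aside: the center has dimension $|\ker M_G| = 2^{\dim_{\F_2}\ker M_G}$, not $2^{|\ker M_G|}$ as written there (you had it right earlier), but this is purely notational and does not affect the argument.
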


\begin{proof}
I previously produced a construction that changes a graph but doesn't change its Clifford class. I just showed that this construction doesn't change the rank of the adjacency matrix over $\F_2$. I also showed that any graph has exactly one graph of type $G(k,m)$ in its class. The adjacency matrix of a Clifford algebra of the graph $G(k,m)$ has rank $2k$. Thus, the rank separates the classes.
\end{proof}

\begin{corollary}
If the rank of the adjacency matrix considered over $\F_2$ of a graph is equal to 2k, then the center of the Clifford algebra corresponding to this graph is $2^{n-2k}$, where $n$ is the number of vertices.
\end{corollary}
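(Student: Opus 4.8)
The plan is to read the corollary straight off the preceding lemma together with the structural results already in hand. Writing $M$ for the adjacency matrix of $G$ over $\F_2$, the previous lemma tells us that $G$ lies in the same Clifford class as the graph $G(k,m)$, whose adjacency matrix also has rank $2k$. Since $G$ has $n$ vertices while $G(k,m)$ has $2k+m$ vertices, matching the vertex counts forces $m = n-2k$. By the definition of a Clifford class, $\dim Z(A_G) = \dim Z(A_{G(k,m)})$, and the latter was already computed to be $2^m = 2^{n-2k}$. That alone finishes the proof.

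I would, however, also record the self-contained argument that bypasses the classification, since it explains \emph{why} the rank is the relevant invariant. Recall from the discussion of the center that a monomial $e_\alpha$ is central exactly when the sum of the rows of $M$ indexed by $\alpha$ vanishes modulo $2$; since $M$ is symmetric, this is the same as requiring $M\alpha = 0$, i.e.\ that $\alpha$, viewed as a vector in $\F_2^n$, lies in $\ker_{\F_2} M$. The distinct monomials $e_\alpha$ are linearly independent over $\C$, and the central monomials span $Z(A_G)$ (as noted earlier); hence $\dim_\C Z(A_G)$ equals the number of $\alpha \in \ker_{\F_2} M$, that is, $|\ker_{\F_2} M|$.

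The computation then closes by rank--nullity over $\F_2$: $\dim_{\F_2}\ker_{\F_2} M = n - \operatorname{rank}_{\F_2} M = n - 2k$, so $|\ker_{\F_2} M| = 2^{\,n-2k}$, and therefore $\dim_\C Z(A_G) = 2^{\,n-2k}$, as claimed.

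There is essentially no obstacle here, as the substantive work is contained in the lemmas above; the only point meriting a word of care is the identification of the set of central monomials with an honest $\F_2$-subspace (which makes its cardinality automatically a power of two). That identification is immediate once one notes that the condition "the relevant row sum vanishes mod $2$'' is linear in $\alpha$, and it is consistent with the earlier remark that the dimension of the center is always a power of $2$.
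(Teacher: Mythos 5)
Your proposal is correct, and your first paragraph is exactly the route the paper intends: the corollary is stated without proof immediately after the lemma identifying Clifford classes with $\F_2$-ranks, so it is meant to be read off by placing $G$ in the class of $G(k,m)$ with $m=n-2k$ and quoting the computation $\dim Z(A_{G(k,m)})=2^m$. Your second, self-contained argument is a genuinely different and arguably better proof: the paper already observed that $Z(A_G)$ is spanned by the central monomials $e_\alpha$ and that $e_\alpha$ is central precisely when the rows of the adjacency matrix indexed by $\alpha$ sum to zero mod $2$, i.e.\ $\alpha\in\ker_{\F_2}M$; rank--nullity then gives $|\ker_{\F_2}M|=2^{n-2k}$ directly, with no appeal to the classification by $G(k,m)$ or to Lemma \ref{Constr}. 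This version buys you two things: it explains \emph{why} the $\F_2$-rank is the governing invariant (the center is literally the group algebra of $\ker_{\F_2}M$ sitting inside $A_G$), and it substantiates the paper's earlier unproved remark that the dimension of the center is always a power of $2$, since the set of central $\alpha$ is an $\F_2$-subspace. The only point worth stating carefully, which you do, is that the row-sum condition is $\alpha^{T}M=0$, equivalent to $M\alpha=0$ by symmetry of $M$ (and the vanishing diagonal ensures no correction term when $i\in\alpha$).
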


\section{Odd-Determinant Graphs}

Let us call a graph an odd-determinant graph if its Clifford graph algebra has a one-dimensional center. 

\begin{theorem}
The odd-determinant graphs are the graphs whose adjacency matrix has an odd determinant.
\end{theorem}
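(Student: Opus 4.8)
The plan is to chain together the corollary just proved with the elementary relationship between the integer determinant and the determinant over $\F_2$. By definition, a graph $G$ on $n$ vertices is an odd-determinant graph exactly when $Z(A_G)$ is one-dimensional. The corollary states that if the adjacency matrix $A$ of $G$ has rank $2k$ over $\F_2$, then $\dim Z(A_G) = 2^{n-2k}$. Hence $\dim Z(A_G) = 1$ if and only if $n = 2k$, i.e. if and only if $A$ has full rank $n$ over $\F_2$. So the theorem reduces to showing that $A$ has full $\F_2$-rank precisely when its integer determinant is odd.

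For that step I would invoke the standard fact that the determinant is a polynomial in the matrix entries with integer coefficients, so reduction modulo $2$ commutes with taking determinants: the reduction mod $2$ of $\det A$ (computed over $\Z$) equals the determinant of the entrywise reduction of $A$, computed in $\F_2$. Since the adjacency matrix already has entries in $\{0,1\}$, this says that $\det A$ is odd exactly when the determinant of $A$ over $\F_2$ is nonzero, which holds exactly when $A$ is invertible over $\F_2$, i.e. exactly when $A$ has $\F_2$-rank $n$. Combining the two equivalences: $G$ is an odd-determinant graph $\iff \dim Z(A_G) = 1 \iff A$ has $\F_2$-rank $n \iff \det A$ is odd.

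As a remark, this is consistent with earlier observations in the paper. Over $\F_2$, a symmetric matrix with zero diagonal is alternating, so its rank is automatically even; requiring that rank to equal $n$ forces $n$ to be even, which recovers the earlier fact that a Clifford graph algebra on an odd number of vertices must have a center of dimension greater than $1$ (so no graph on an odd number of vertices can be an odd-determinant graph, and indeed a matrix of odd size with zero diagonal and symmetric entries has even determinant).

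I do not expect a genuine obstacle here: essentially all of the work was already carried out in establishing the rank–dimension corollary (which in turn rested on Lemma~\ref{Constr} and the invariance of the $\F_2$-rank under the basic row/column replacement). The only new ingredient is the observation that reduction modulo $2$ commutes with the determinant, which is routine. If one wanted a fully self-contained argument one could instead cite the basic-operation analysis from the Adjacency Matrices section directly: that operation preserves the parity of $\det A$ and transforms $G$ to the representative $G(k,m)$, whose adjacency matrix is block-diagonal with $k$ copies of $\left(\begin{smallmatrix}0&1\\1&0\end{smallmatrix}\right)$ and $m$ zero blocks, hence has determinant $0$ over $\Z$ unless $m=0$, in which case the determinant is $(-1)^k$, which is odd; this again matches "$\det A$ odd $\iff m = 0 \iff$ one-dimensional center".
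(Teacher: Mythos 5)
Your proof is correct and follows essentially the same route as the paper: the paper's own (very terse) proof just invokes the invariance of the determinant's parity under the basic operation together with the reduction to the canonical graph $G(k,m)$, which is exactly the argument you spell out in your closing paragraph. Your primary derivation---via the rank corollary and the observation that reduction modulo $2$ commutes with the determinant---is an equivalent repackaging of the same facts, and is if anything more explicit than what the paper records.
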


\begin{proof}
By the discussion above, the parity of the determinant of the adjacency matrix is invariant under basic construction, and by using the basic construction we can replace a given graph $G$ with a canonical graph $G(k,0)$ such that their Clifford algebras are isomorphic.
\end{proof}

Obviously, an odd-determinant graph doesn't have isolated vertices. Also, all odd-determinant graphs are in the same Clifford class. No graphs in the Clifford class of an odd-determinant graph have isolated vertices. And vice versa, if no graphs in the Clifford class of a graph have isolated vertices then all the graphs in this class are odd-determinant graphs. 

From the classification theorem it follows that an odd-determinant graph has a complete graph on even number of vertices in its class.

In the next two sections I discuss other sets of graphs that are related to odd-determinant graphs.

\section{Mating Graphs}

By definition, a mating graph, sometimes called a point-determining graph (see \cite{Gessel}), is a graph such that no two vertices have identical sets of neighbors.

\begin{lemma}
Odd-determinant graphs are mating graphs.
\end{lemma}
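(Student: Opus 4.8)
The plan is to prove the contrapositive-flavored statement directly: if a graph $G$ is \emph{not} a mating graph, then $G$ is not an odd-determinant graph, i.e. its adjacency matrix is singular over $\F_2$. Suppose two distinct vertices $i$ and $j$ have identical sets of neighbors. I would split into two cases according to whether $i$ and $j$ are adjacent to each other. If $i$ and $j$ are non-adjacent, then rows $i$ and $j$ of the adjacency matrix are literally equal (both have a $1$ exactly in the columns indexed by the common neighbor set, and a $0$ in columns $i$ and $j$ since there are no loops and $i,j$ are non-adjacent), so the matrix has two equal rows and its determinant vanishes over any field, in particular over $\F_2$.

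The remaining case is when $i$ and $j$ \emph{are} adjacent. Then row $i$ has a $1$ in column $j$ and a $0$ in column $i$, while row $j$ has a $1$ in column $i$ and a $0$ in column $j$; in all other columns they agree (the common neighbor set, which now includes neither $i$ nor $j$ in the "other columns" bookkeeping — one must be slightly careful that $i \in N(j)$ and $j \in N(i)$ are forced by adjacency and consistent with "identical neighbor sets" only after excluding $i,j$ themselves, which is the usual convention for point-determining graphs). The key observation is that row $i$ $+$ row $j$ over $\F_2$ is the vector supported exactly on coordinates $i$ and $j$, both entries equal to $1$. I would then argue that this same vector also appears as column $i$ $+$ column $j$ by symmetry, and more usefully, I would show the vector $v = \mathbf{1}_i + \mathbf{1}_j$ lies in the kernel: computing $A v$ gives the sum of columns $i$ and $j$, whose $k$-th entry for $k \neq i,j$ is $a_{ki} + a_{kj} = 0$ since $i,j$ have the same neighbors among the other vertices, and whose $i$-th and $j$-th entries are $a_{ii}+a_{ij} = 0+1 = 1$ and $a_{ji}+a_{jj} = 1+0 = 1$ — so $Av = v \neq 0$, which shows $A - I$ is singular rather than $A$. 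This indicates the cleaner route: treat both cases uniformly by noting that in the adjacency matrix, $v = \mathbf{1}_i + \mathbf{1}_j$ satisfies $Av = 0$ when $i,j$ are non-adjacent and $Av = v$ when they are adjacent, and in the adjacent case pass instead to the observation that over $\F_2$ we should compare rows directly: rows $i$ and $j$ of $A$ differ only in positions $i,j$, so consider the matrix's row space — actually the sharpest statement is simply that $\det(A) = 0$ over $\F_2$ because adding row $j$ to row $i$ produces a row with support $\{i,j\}$, and then one shows this reduced row, together with the structure, forces a dependency; alternatively and most simply, use that an odd-determinant graph has a complete graph $K_{2k}$ in its Clifford class (stated just above), and $K_{2k}$ is visibly a mating graph, while being in the same Clifford class preserves the rank of the adjacency matrix — but \emph{not} the mating property, so this shortcut fails and the direct linear-algebra argument is necessary.

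So I would finalize the adjacent case as follows: since rows $i$ and $j$ agree outside columns $i,j$, and $(a_{ii},a_{ij}) = (0,1)$ while $(a_{ji},a_{jj}) = (1,0)$, the sum $r_i + r_j$ (mod $2$) equals $\mathbf{1}_i + \mathbf{1}_j$. Likewise, because $A$ is symmetric, $\mathbf{1}_i + \mathbf{1}_j$ is in the \emph{row} space. Now perform the row operation $r_i \mapsto r_i + r_j$; the new row $i$ is $\mathbf{1}_i+\mathbf{1}_j$. Then perform the column operation $c_i \mapsto c_i + c_j$; in the new row $i$ (which is $\mathbf{1}_i + \mathbf{1}_j$), this turns entry $i$ into $1+1 = 0$ and entry $j$ stays $1$, so new row $i$ is $\mathbf{1}_j$ — but wait, we must track the other rows too; cleaner: after $r_i \mapsto r_i + r_j$, expand the determinant along row $i = \mathbf{1}_i + \mathbf{1}_j$, getting $\det A = \pm M_{ii} \pm M_{ij}$ where $M_{ii}, M_{ij}$ are minors; one then checks $M_{ii} = M_{ij}$ over $\F_2$ because deleting row $i$ and column $i$ versus row $i$ and column $j$ from the symmetric matrix with rows $j$ and the common-neighbor structure gives matrices that agree after a single row/column relabeling — hence $\det A = M_{ii} + M_{ij} = 0$ over $\F_2$. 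The main obstacle is precisely this bookkeeping in the adjacent case: verifying that the two minors coincide mod $2$ requires matching up the rows and columns carefully using the hypothesis that $N(i)\setminus\{j\} = N(j)\setminus\{i\}$, and I expect this to be the only place the proof needs genuine care rather than a one-line observation. Everything else — the non-adjacent case and the reduction — is immediate.
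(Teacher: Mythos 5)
Your non-adjacent case is correct, and it is essentially the linear-algebra argument the paper itself uses a section later (to show that graphs with an invertible adjacency matrix are mating): identical neighborhoods of two non-adjacent vertices give two equal rows of the adjacency matrix, hence zero determinant. The problem is the adjacent case, where you spend most of your effort: it is both unnecessary and, as you have set it up, unprovable. It is unnecessary because with the open-neighborhood definition of a mating (point-determining) graph --- the one the paper uses, following Gessel--Li --- two \emph{adjacent} vertices can never have identical neighbor sets: $j \in N(i)$ but $j \notin N(j)$, since there are no loops. So the only way a graph can fail to be mating is via a non-adjacent pair, and your first case already finishes the proof. It is unprovable because the conclusion you want in that case is false: your own computation gives $Av = v$ for $v = \mathbf{1}_i + \mathbf{1}_j$, which certifies singularity of $A - I$, not of $A$; and the minor identity $M_{ii} = M_{ij}$ over $\F_2$ that you defer as ``the only place needing genuine care'' already fails for $K_2$, whose adjacency matrix satisfies your adjacent-case hypotheses and has $M_{11}=0$, $M_{12}=1$, and odd determinant. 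The correct repair is to delete the adjacent case, not to complete it.

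For comparison, the paper's proof never touches the adjacency matrix: if $N(i)=N(j)$, then the monomial $e_ie_j$ commutes with every generator ($e_k$ for $k\ne i,j$ because $k$ is joined to an even number of elements of $\{i,j\}$, and $e_i,e_j$ themselves because $i$ and $j$ are forced to be non-adjacent), so it is a nontrivial central monomial and the center has dimension at least $2$, contradicting the definition of an odd-determinant graph as one whose Clifford algebra has one-dimensional center. Your determinant route is equally legitimate, since the equivalence with odd determinant is established before this lemma, but only once the spurious adjacent case is removed.
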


\begin{proof}
If two vertices $i$ and $j$ of the graph $G$ have the same set of neighbors, then the product of the corresponding generators $e_ie_j$ is in the center of the Clifford graph algebra $A_G$. Hence, odd-determinant graphs can't have two vertices with the same set of neighbors.
\end{proof}

The converse of the lemma is not true. There are mating graphs that are not odd-determinant graphs. The smallest examples have three vertices. Graphs with an odd number of vertices can't be odd-determinant. At the same time there are 2 mating graphs with 3 vertices:  $K_3$, and the union of $K_2$ and an isolated point (See Figure \ref{MatingGraphs3}).  

\begin{figure}[htp]
  \centering
  \includegraphics[scale=0.5]{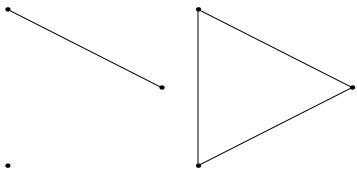}
  \caption{Mating graphs with 3 vertices.}\label{MatingGraphs3}
\end{figure}

It is easy to see that complete graphs are always mating graphs. The set of neighbors of each vertex is uniquely determined by the vertex itself: the neighbors of the vertex are the set of all the other vertices. That means the complete graphs with an odd number of vertices give an infinite set of examples of mating and not odd-determinant graphs. Similarly, the union of a complete graph with an isolated point is a mating graph.

Below I present all mating graphs with 4 vertices. Only one of them is not an odd-determinant graph --- the union of the complete graph $K_3$ and an isolated vertex.

\begin{figure}[htp]
  \centering
  \includegraphics[scale=0.5]{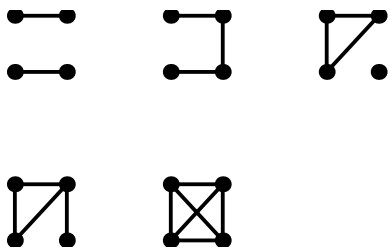}
  \caption{Mating graphs with 4 vertices.}\label{MatingGraphs4}
\end{figure}

We see that if we want to estimate the number of odd-determinant graphs we have a natural bound: the number of such graphs is not more than the number of mating graphs.

There is a natural set of graphs that is somewhat in between odd-de\-ter\-mi\-nant graphs and mating graphs. This is the set of graphs with invertible adjacency matrices.

\section{Invertible Adjacency Matrix Graphs}

\begin{lemma}
Odd-determinant graphs have an invertible adjacency matrix. Graphs that have an invertible adjacency matrix are mating graphs.
\end{lemma}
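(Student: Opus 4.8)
The statement to prove has two parts: (1) every odd-determinant graph has an invertible adjacency matrix (over some field, presumably $\C$ or equivalently $\Q$), and (2) every graph with an invertible adjacency matrix is a mating graph.

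\medskip

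The plan is to handle the two implications separately, using the $\F_2$-rank characterization already established. For the first implication, recall that an odd-determinant graph is, by the previous theorem, exactly a graph whose adjacency matrix has odd determinant, i.e.\ determinant $\equiv 1 \pmod 2$. The key observation is that the determinant of the adjacency matrix is an integer, and if an integer is odd it is in particular nonzero; hence the adjacency matrix is invertible as a matrix over $\Q$ (equivalently over $\C$). So the first sentence is essentially immediate from the integrality of the determinant together with the odd-determinant theorem; I would just remark that "odd $\Rightarrow$ nonzero over $\Z$."

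\medskip

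For the second implication, I would argue the contrapositive: if $G$ is not a mating graph, then its adjacency matrix is not invertible. Suppose two distinct vertices $i$ and $j$ have the same set of neighbors. I need to compare rows $i$ and $j$ of the adjacency matrix $(a_{k\ell})$. For any vertex $k \notin \{i,j\}$ we have $a_{ik} = a_{jk}$ since $i$ and $j$ have the same neighbors. The only coordinates where rows $i$ and $j$ can differ are columns $i$ and $j$ themselves. Since there are no loops, $a_{ii} = a_{jj} = 0$. As for $a_{ij}$ and $a_{ji}$: by symmetry $a_{ij} = a_{ji}$, and moreover since $i$ and $j$ have the same neighbor set, $i$ is a neighbor of $j$ iff $i$ is a neighbor of $i$, which is false (no loops), so $a_{ij} = a_{ji} = 0$; hence vertices $i$ and $j$ are non-adjacent. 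Therefore rows $i$ and $j$ agree in every column, so they are equal, the adjacency matrix is singular, and $G$ does not have an invertible adjacency matrix. This also matches the paper's earlier remark that $e_i e_j$ is central in this situation: equal rows means $\{i,j\}$ sums to zero mod $2$.

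\medskip

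The main obstacle — really the only subtle point — is the case analysis on the $(i,j)$ entry when $i$ and $j$ share a neighbor set: one has to notice that "same neighbors, no loops" forces $i$ and $j$ to be non-adjacent, so that rows $i$ and $j$ coincide exactly (not merely off the $\{i,j\}$ block). Everything else is a direct consequence of results already in the paper. As a concluding remark I would note that, combined with the preceding lemma, this shows the three classes form a chain: odd-determinant graphs $\subseteq$ invertible-adjacency-matrix graphs $\subseteq$ mating graphs, with both inclusions strict (the union of $K_3$ with an isolated vertex being in the last class but not the second, and a graph with even-but-nonzero adjacency determinant being in the second but not the first).
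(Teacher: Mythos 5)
Your proof is correct and follows essentially the same route as the paper: odd determinant implies nonzero determinant (hence invertibility), and a non-mating graph has two equal rows in its adjacency matrix, forcing the determinant to vanish. You are in fact slightly more careful than the paper, which simply asserts the equal-rows claim, whereas you verify the one genuinely needed detail --- that two vertices with identical neighbor sets in a loopless graph must be non-adjacent, so the rows coincide in the $\{i,j\}$ columns as well.
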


\begin{proof}
Graphs that have an invertible adjacency matrix have a nonzero determinant. Odd-determinant graphs are a subset of them. On the other hand a non-mating graph has two equal rows in its adjacency matrix, thus the determinant of its adjacency matrix is zero.
\end{proof}

Thus, graphs with an invertible adjacency matrix give us a better bound on the number of odd-determinant graphs than mating graphs.

The smallest graph with an invertible adjacency matrix that has an even determinant is $K_3$. Any graph with an invertible adjacency matrix and an odd number of vertices is not an odd-determinant graph. Let us find an example of a graph with an even number of vertices and an invertible adjacency matrix which is not an odd-determinant graph. We can see that there are no such graphs with 2 or 4 vertices. Hence, we should try six vertices. There are 10 such graphs with 6 vertices and you can see them in Figure \ref{InvertibleEven6}. Not surprisingly, the union of two copies of $K_3$ is in this set.

\begin{figure}[htp]
  \centering
  \includegraphics[scale=0.5]{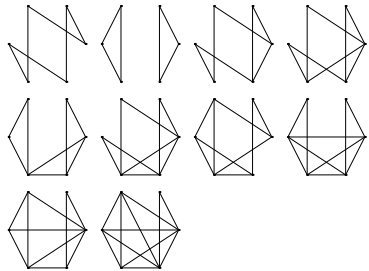}
  \caption{Graphs with an invertible adjacency matrix with an even determinant.}\label{InvertibleEven6}
\end{figure}

To complete our discussion I would like to show mating graphs with a degenerate adjacency matrix. The smallest such graph has one vertex. The smallest nontrivial case is a graph with 3 vertices and it is the union of $K_2$ and $K_1$.

\section{Unions of Graphs}

I already mentioned the unions of graphs, but this whole section  is dedicated to the unions of graphs, so I would like to remind you of the formal definition.

Suppose we are given two graphs $G_1$ and $G_2$ with disjoint sets of vertices $V_1$ and $V_2$ and the sets of edges $X_1$ and $X_2$. The union graph $G$ has $V$ as its set of vertices and $X$ as its set of edges, where $V$ is the union of $V_1$ and $V_2$ and $X$ is the union of $X_1$ and $X_2$.

Let us see how mating graphs behave with respect to unions. Obviously, if a graph contains at least two isolated vertices, it can't be a mating graph. At the same time we can build many mating graphs by taking unions. The following lemma is easy to prove.

\begin{lemma}
A union of mating graphs is a mating graph if it doesn't contain more than one isolated vertex.
\end{lemma}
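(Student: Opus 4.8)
The plan is to prove the contrapositive direction by analyzing when a vertex of the union graph $G = G_1 \cup G_2$ fails the mating condition, i.e. when two distinct vertices $u, v$ share the same neighborhood $N(u) = N(v)$. The key observation is that neighborhoods in a union graph respect the partition: if $u \in V_1$, then every neighbor of $u$ lies in $V_1$, and similarly for $V_2$. So first I would split into two cases according to whether $u$ and $v$ lie in the same vertex set or in different ones.

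In the first case, suppose $u, v \in V_1$ (the case $V_2$ is symmetric). Then $N_G(u) = N_{G_1}(u)$ and $N_G(v) = N_{G_1}(v)$, so $N_G(u) = N_G(v)$ forces $N_{G_1}(u) = N_{G_1}(v)$, contradicting the assumption that $G_1$ is a mating graph. Hence no violation can arise from two vertices in the same piece. In the second case, suppose $u \in V_1$ and $v \in V_2$. Since $V_1$ and $V_2$ are disjoint and no edges run between them, $N_G(u) \subseteq V_1$ and $N_G(v) \subseteq V_2$; the only way these can be equal is if both are empty, i.e. $u$ and $v$ are both isolated in $G$. But that means $G$ has at least two isolated vertices, which is excluded by hypothesis.

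Putting the two cases together: if $G$ contains at most one isolated vertex and both $G_1, G_2$ are mating, then no two distinct vertices of $G$ share a neighborhood, so $G$ is a mating graph. I would also remark on the (trivial) edge cases — if one of the pieces is empty the statement is vacuous, and a single isolated vertex in the union is permitted precisely because it cannot coincide in neighborhood with any other vertex unless that other vertex is also isolated.

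I do not expect any real obstacle here; the lemma is labeled "easy to prove" in the text and the only subtlety is remembering that cross-piece vertices automatically have disjoint (hence unequal, unless empty) neighborhoods, which is exactly where the "at most one isolated vertex" hypothesis enters. The argument is a short case analysis with no computation.
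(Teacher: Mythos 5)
Your case analysis is correct and complete: vertices in the same component inherit their neighborhoods from that component (so the mating property of each $G_i$ rules out a collision there), while vertices in different components can only share a neighborhood if both neighborhoods are empty, which the at-most-one-isolated-vertex hypothesis forbids. The paper gives no proof (it declares the lemma ``easy to prove''), and your argument is exactly the intended one, extending to unions of more than two graphs by the same reasoning or a trivial induction.
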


This lemma allows us to provide many more examples of mating graphs. In particular, unions of complete graphs as long as they do not contain more than one isolated vertex are mating graphs.

The determinants of adjacency matrices behave nicely with respect to unions. If graph $G$ is the union of $G_1$ and $G_2$, then the determinant of the adjacency matrix of $G$ is the product of the corresponding determinants for $G_1$ and $G_2$. From here the next lemma follows:

\begin{lemma}
If $G_1$ and $G_2$ are two graphs with invertible adjacency matrices, then their union $G$ has an invertible adjacency matrix. If the union $G$ of two graph $G_1$ and $G_2$ has an invertible adjacency matrix, then the graphs $G_1$ and $G_2$ themselves have invertible adjacency matrices.
\end{lemma}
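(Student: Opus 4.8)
The plan is to reduce the statement about unions of graphs to the multiplicativity of determinants under block-diagonal direct sums, a fact we have already invoked informally in this section.

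First I would recall that if $G$ is the union of $G_1$ and $G_2$, then after ordering the vertices of $G$ so that the vertices of $G_1$ come first, the adjacency matrix $A_G$ has block-diagonal form $A_G = \begin{pmatrix} A_{G_1} & 0 \\ 0 & A_{G_2}\end{pmatrix}$, since there are no edges between $V_1$ and $V_2$. Consequently $\det A_G = \det A_{G_1} \cdot \det A_{G_2}$, and this identity holds over $\Z$ (hence over any field, in particular over $\C$ or $\F_2$). This is the only structural input needed.

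For the forward direction, if $A_{G_1}$ and $A_{G_2}$ are both invertible, then $\det A_{G_1} \ne 0$ and $\det A_{G_2} \ne 0$, so $\det A_G = \det A_{G_1}\cdot\det A_{G_2} \ne 0$, whence $A_G$ is invertible. For the converse, if $A_G$ is invertible then $\det A_G \ne 0$; since $\det A_G = \det A_{G_1}\cdot\det A_{G_2}$, neither factor can vanish, so both $A_{G_1}$ and $A_{G_2}$ are invertible. (Equivalently, one can argue at the level of matrices: a block-diagonal matrix is invertible iff each block is, with inverse the block-diagonal matrix of the inverses.)

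There is essentially no obstacle here; the lemma is a direct corollary of the block-diagonal structure of the adjacency matrix of a disjoint union, which was already observed in the sentence preceding the statement. The only thing to be slightly careful about is specifying that invertibility is meant over the same field throughout (the paper uses $\F_2$ when relating this to odd-determinant graphs and Clifford classes), but the argument is field-independent: it works verbatim over $\F_2$, over $\C$, or over $\Z$ (where "invertible" means determinant $\pm 1$). I would state the proof in one or two sentences and move on.
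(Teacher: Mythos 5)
Your proof is correct and follows exactly the route the paper intends: the paper states immediately before the lemma that the determinant of the adjacency matrix of a union is the product of the determinants for $G_1$ and $G_2$ (i.e., the block-diagonal structure), and declares that the lemma follows from this. Your write-up simply makes that one-line deduction explicit, including the (valid) observation that the argument is field-independent.
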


The following two lemmas can be trivially proved using the determinant argument, but I would like to use Clifford graph algebras to prove them. The reason I am doing this is that I would like to share with you the beauty of Clifford graph algebras.

\begin{lemma}
If $G_1$ and $G_2$ are two odd-determinant graphs, then the union graph $G$ is an odd-determinant graph.
\end{lemma}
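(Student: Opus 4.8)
The plan is to exploit the fact that the Clifford graph algebra of a disjoint union is a tensor product of the two factors, and that taking centers is compatible with tensor products. Write $V_1$ for the vertex set of $G_1$ with generators $e_i$, and $V_2$ for the vertex set of $G_2$ with generators $f_j$. In the union $G$ there are no edges between $V_1$ and $V_2$, so in $A_G$ every $e_i$ commutes with every $f_j$, while the $e_i$ among themselves satisfy the defining relations of $A_{G_1}$ and the $f_j$ among themselves satisfy those of $A_{G_2}$. First I would check that the natural map $A_{G_1}\otimes_\C A_{G_2}\to A_G$ sending $a\otimes b$ to the product of the images of $a$ and $b$ is a well-defined algebra homomorphism — this is exactly the step where the cross-commutation relations are used — and that it is an isomorphism: it is surjective because its image contains all the generators of $A_G$, and both sides have dimension $2^{|V_1|}\cdot 2^{|V_2|}=2^{|V|}$, so it is bijective.

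Next I would identify the center of $A_G$. The quickest route is to cite the standard fact that for finite-dimensional algebras over a field, $Z(A\otimes_\C B)=Z(A)\otimes_\C Z(B)$. To keep things self-contained, one can instead argue directly with monomials in the spirit of Section \ref{ca}: every basis monomial $e_\alpha$ of $A_G$ splits uniquely as a product $e_{\alpha_1}f_{\alpha_2}$ of a monomial of $A_{G_1}$ and a monomial of $A_{G_2}$, and whether $e_\alpha$ commutes or anticommutes with a generator $e_i$ (coming from $G_1$) depends only on $e_{\alpha_1}$, while its behaviour against a generator from $G_2$ depends only on $e_{\alpha_2}$. Hence $e_\alpha$ is central in $A_G$ if and only if $e_{\alpha_1}$ is central in $A_{G_1}$ and $e_{\alpha_2}$ is central in $A_{G_2}$. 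Since the center of a Clifford graph algebra is spanned by its central monomials, this yields $Z(A_G)\cong Z(A_{G_1})\otimes_\C Z(A_{G_2})$, and in particular $\dim Z(A_G)=\dim Z(A_{G_1})\cdot\dim Z(A_{G_2})$.

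Finally, because $G_1$ and $G_2$ are odd-determinant graphs, $\dim Z(A_{G_1})=\dim Z(A_{G_2})=1$, so $\dim Z(A_G)=1$; that is, $G$ is an odd-determinant graph.

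The one point that genuinely needs care is the middle step: showing that a central monomial of $A_G$ forces each of its two factors to be central in the corresponding subalgebra. This is intuitively clear — the sign picked up when moving a generator past a monomial is "local" to the component that generator lives in — but it is the claim one must actually write out, since the tensor decomposition of the center is precisely what converts one-dimensionality of $Z(A_{G_1})$ and $Z(A_{G_2})$ into one-dimensionality of $Z(A_G)$.
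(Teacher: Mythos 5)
Your proposal is correct, and its self-contained version --- factoring each basis monomial of $A_G$ as a product of a monomial from $A_{G_1}$ and one from $A_{G_2}$, and observing that centrality of the product forces centrality of each factor in its own component --- is exactly the paper's argument. The tensor-product packaging ($A_G\cong A_{G_1}\otimes_\C A_{G_2}$ with $Z(A\otimes B)=Z(A)\otimes Z(B)$) is a pleasant extra framing but reduces to the same monomial computation.
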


\begin{proof}
Suppose a monomial $m$ is in the center of the Clifford algebra $A_G$. Then we can express it as a product of two monomials $m_1$ and $m_2$, where $m_1 (\text{or }m_2)$ contains only the generators corresponding to the first (or second) graph. It is easy to see that monomials $m_i$ must belong to the center of the Clifford graph algebra $A_{G_i}$.
\end{proof}

The converse is also true:

\begin{lemma}
If $G$ is an odd-determinant graph and is the union of several connected components, then each component is an odd-determinant graph too.
\end{lemma}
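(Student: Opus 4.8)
The plan is to prove the contrapositive-flavored statement directly: given that $G$ is an odd-determinant graph with connected components $G_1, \ldots, G_r$, show each $G_i$ is odd-determinant. Since an odd-determinant graph is one whose Clifford algebra has a one-dimensional center, it suffices to show that if some $G_i$ has a central monomial other than $1$, then so does $G$. This is essentially the same decomposition idea as in the previous lemma, run in the other direction.

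First I would recall that the vertex set of $G$ partitions into the vertex sets of the components $G_1, \ldots, G_r$, and — crucially — there are no edges between distinct components, so generators belonging to different components commute in $A_G$. Hence $A_G$ is (isomorphic to) the tensor product $A_{G_1} \otimes \cdots \otimes A_{G_r}$, and a monomial $e_\alpha$ in $A_G$ factors uniquely as $e_{\alpha_1} \cdots e_{\alpha_r}$, where $\alpha_i$ records the chosen vertices lying in component $G_i$. Next, I would observe that, because generators from distinct components commute, $e_\alpha$ is central in $A_G$ if and only if each factor $e_{\alpha_i}$ is central in $A_{G_i}$: indeed a generator $e_j$ of $G$ lies in exactly one component, say $G_i$, and commutes with every generator from the other components, so $e_j$ commutes with $e_\alpha$ iff it commutes with the factor $e_{\alpha_i}$, which by the criterion in Section~4 means the number of edges from $j$ into $\alpha_i$ is even. (Equivalently: this is just the mod-$2$ adjacency-matrix statement applied to the block-diagonal adjacency matrix of $G$.)

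Now suppose for contradiction that some component, say $G_1$, is not odd-determinant, so $A_{G_1}$ has a central monomial $e_{\beta} \ne 1$. Then $e_{\beta}$, viewed inside $A_G$ via the natural embedding of $A_{G_1}$, has each factor central (the factors for $G_2, \ldots, G_r$ are the empty monomial $1$, which is trivially central), hence by the previous paragraph $e_\beta$ is central in $A_G$; and $e_\beta \ne 1$. This gives a central monomial of $A_G$ distinct from $1$, so $Z(A_G)$ has dimension at least $2$, contradicting that $G$ is odd-determinant. Therefore every component is odd-determinant.

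I expect the only subtle point to be making the "central iff each factor central" step fully rigorous, i.e.\ justifying that centrality of $e_\alpha$ can be tested one component at a time; but this is immediate from the observation that generators in different components commute together with the central-monomial criterion already established in Section~4 (or, if one prefers, from the block-diagonal structure of the adjacency matrix over $\F_2$, since the center has dimension $2^{n - \operatorname{rank}}$ and the rank of a block-diagonal matrix is the sum of the ranks of the blocks). Everything else is bookkeeping with monomial factorizations, exactly paralleling the proof of the preceding lemma.
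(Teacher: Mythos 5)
Your argument is correct and is essentially the paper's own proof: both take a nontrivial central monomial of a component's Clifford algebra and observe that, since generators from distinct components commute, it remains central in $A_G$, contradicting one-dimensionality of $Z(A_G)$. Your additional discussion of the component-wise centrality criterion just makes explicit what the paper leaves implicit.
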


\begin{proof}
If a Clifford algebra corresponding to a connected component has a center, then the corresponding monomial is in the center of the Clifford algebra of the whole graph.
\end{proof}

The previous lemmas allow us to build many mating graphs that are not odd-determinant graphs. For example, the union of several complete graphs is a mating and at the same time not an odd-determinant graph if it contains not more than one isolated vertex and if at least one of the complete graphs in the union has an odd number of vertices.

\section{Sequences}

For the reference, I would like to present here the list of sequences related to this paper. These are the sequences of different types of graphs indexed by the number of vertices. 

The shortest sequence in my list is the sequence of odd-determinant graphs. Obviously, odd-determinant graphs can have only an even number of vertices. That means, the sequence I am talking about is the sequence $a(n)$ of odd-determinant graphs with $2n$ vertices. As I showed before the sequence starts as: $a(1) = 1$, $a(2) = 4$. I checked that there are 47 odd-determinant graphs of order 6. (see Figures \ref{ODGraphs1}, \ref{ODGraphs2}, \ref{ODGraphs3}, \ref{ODGraphs4}) This means that $a(3) = 47$.

\begin{figure}[htp]
  \centering
  \includegraphics[scale=0.5]{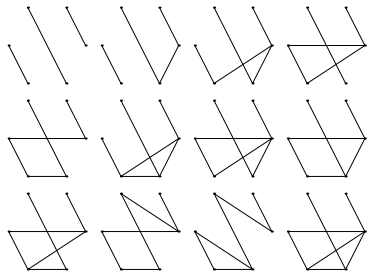}
  \caption{Odd-determinant graphs with 6 vertices. Part 1.}\label{ODGraphs1}
\end{figure}

\begin{figure}[htp]
  \centering
  \includegraphics[scale=0.5]{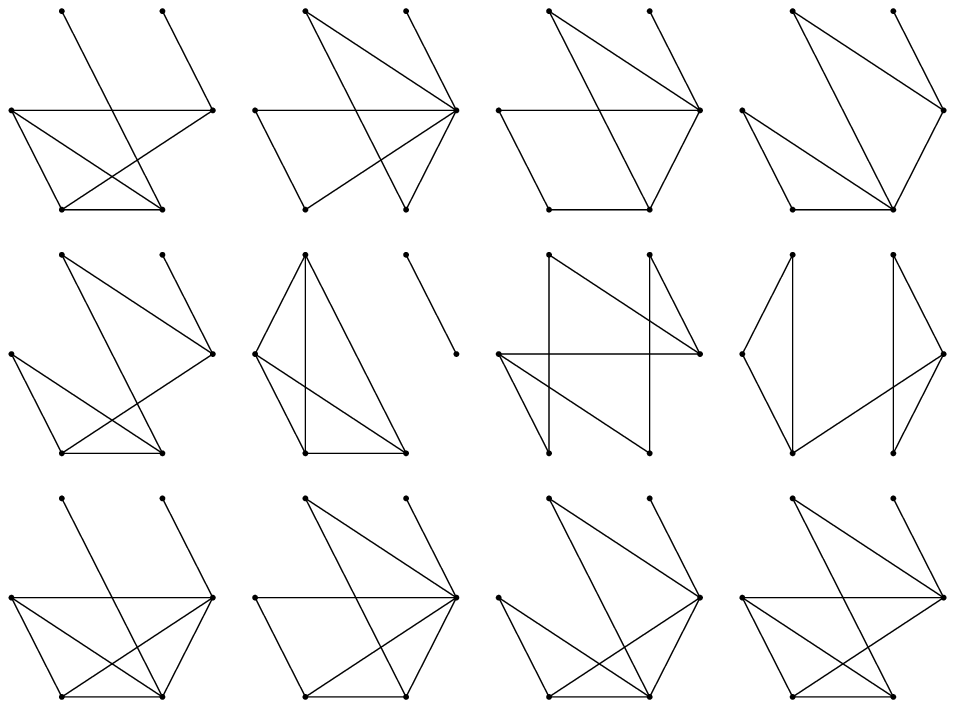}
  \caption{Odd-determinant graphs with 6 vertices. Part 2.}\label{ODGraphs2}
\end{figure}

\begin{figure}[htp]
  \centering
  \includegraphics[scale=0.5]{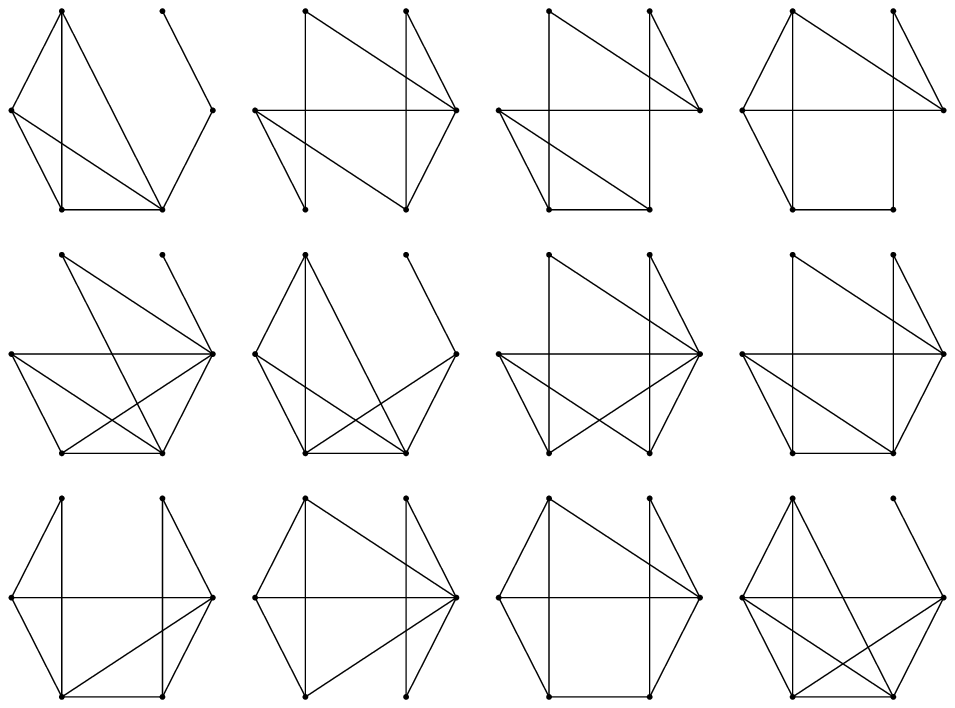}
  \caption{Odd-determinant graphs with 6 vertices. Part 3.}\label{ODGraphs3}
\end{figure}

\begin{figure}[htp]
  \centering
  \includegraphics[scale=0.5]{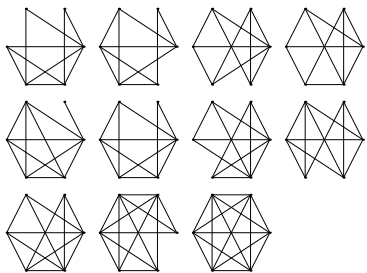}
  \caption{Odd-determinant graphs with 6 vertices. Part 4.}\label{ODGraphs4}
\end{figure}

I submitted this sequence to the Online Encyclopedia of Integer Sequences, see \cite{OEIS} and its number in the OEIS is A141040. I will appreciate if someone can expand it beyond the first three terms. 

\begin{itemize}
\item Number of odd-determinant graphs with $2n$ vertices (A141040): 1, 4, 47, $\ldots$.
\end{itemize}

In the list below I show other sequences related to this paper. The $n$-th term of each sequence is the number of particular graphs with $n$ vertices. All sequences start with index 1, that is, the first term is the number of particular graphs with one vertex. The A number of the sequence references the sequence in the Online Encyclopedia of Integer Sequences, see \cite{OEIS}.

\begin{itemize}
\item Number of graphs with $n$ unlabeled vertices (A000088): 1, 2, 4, 11, 34, 156, 1044, 12346, $\ldots$.

\item Number of even-determinant graphs with $n$ vertices (A140981): 1, 1, 4, 7, 34, 109, 1044, $\ldots$. This sequence is the difference between the sequence of all graphs and the sequence of odd-determinant graphs.

\item Number of mating graphs with $n$ vertices (A004110): 1, 1, 2, 5, 16, 78, 588, $\ldots$. This sequence is the same as sequence of $n$-node graphs without endpoints as proved in \cite{Kilibarda}.

\item Number of non-mating graphs with $n$ vertices (A141580): 0, 1, 2, 6, 18, 78, 456, $\ldots$. This sequence is the difference between the sequence of all graphs and the sequence of mating graphs.

\item Number of graphs with $n$ vertices and an invertible adjacency matrix (A109717): 0, 1, 1, 4, 9, 57, 354, $\ldots$.

\item Number of graphs with $n$ vertices and a degenerate adjacency matrix (A133206): 1, 1, 3, 7, 25, 99, 690, $\ldots$. This sequence is the difference between the sequence of all graphs and the sequence of graphs with an invertible adjacency matrix.

\item Number of mating graphs with $n$ vertices and a degenerate adjacency matrix (A133279): 1, 0, 1, 1, 7, 21, 234,  $\ldots$. This sequence is the difference between the sequence of mating graphs and the sequence of graphs with an invertible adjacency matrix.

\item Number of even-determinant graphs with $n$ vertices and an invertible adjacency matrix (A103869): 0, 0, 1, 0, 9, 10, 354, $\ldots$. This sequence is the difference between the sequence of even-determinant graphs and the sequence of graphs with a degenerate adjacency matrix.
\end{itemize}

\section{Dynkin Diagrams}

We can build Clifford graph algebras corresponding to the Dynkin diagrams of simply-laced Lie algebras. Here is the table of the dimensions of the center for such algebras. Recall that Lie algebras $A_n$ are defined for $n > 0$ and $D_n$ for $n > 3$.

\begin{center}
\begin{tabular}[c]{|c|c|}
\hline
Lie Algebra & Dimension\\
\hline
$A_{2k-1}$   & 2  \\
$A_{2k}$     & 1  \\
$D_{2k-1}$   & 2  \\
$D_{2k}$     & 4  \\
$E_6$        & 1  \\
$E_7$        & 2  \\
$E_8$        & 1  \\
\hline
\end{tabular}
\end{center}

Clifford graph algebras for Dynkin diagrams were used in my masters' thesis to build models of representations. A model of representations of a group is a representation that contains every irreducible representation exactly once. The construction without proofs is presented in \cite{TKh_Models}.

\section{Conclusions}

I have shown how to construct a Clifford graph algebra, an interesting and simple algebraic object corresponding to a graph. It is a direct sum of $2^m$ copies of matrix algebra $\text{Mat}_{2^k}$, where $2k+m$ is the number of vertices. The structure of the Clifford graph algebra is completely defined by the rank, $2k$, of the graph's adjacency matrix modulo 2. 

Graphs whose Clifford algebras have a one-dimensional center are the most interesting, because they represent the least degenerate case. I called these graphs odd-determinant graphs. The odd-determinant graphs always have an invertible adjacency matrix and, consequently, are always mating graphs.

The Clifford algebras corresponding to Dynkin diagrams are very useful in representation theory. Here too, odd-determinant graphs $A_{2k}, E_6$ and $E_8$ produce the least degenerate construction of models of representations.

\section{Acknowledgements}

I am thankful to Mikhail Khovanov for listening to me and supporting me when I was overexcited working on this paper. I also thank Mikhail Khovanov for reading the second draft of this paper. I am grateful to Julie Sussman, PPA, for editing this paper.

\end{document}